\documentclass[a4paper,12pt]{article}
\usepackage{comment}
\usepackage{cite}
\usepackage{amsmath}
\usepackage{amssymb}
\usepackage{amsfonts}
\usepackage[T1]{fontenc}
\usepackage[utf8]{inputenc}
\usepackage{graphicx}
\usepackage{fancyhdr}
\usepackage{float}
\usepackage{xcolor}
\usepackage{authblk}
\usepackage{mathrsfs}
\usepackage{empheq}
\usepackage[hyphens]{url}
\usepackage{hyperref}
\usepackage[]{amsthm}
\usepackage{tikz}
\usetikzlibrary{decorations.markings}

\usepackage{geometry}
\theoremstyle{plain}
\newtheorem{theorem}{Theorem}[section]
\newtheorem{proposition}[theorem]{Proposition}

\theoremstyle{definition}
\newtheorem{definition}[theorem]{Definition}

\theoremstyle{remark}

\newtheorem{example}{Example}[section] 

\begin{document}

\title{A combinatorial proof of Jacobi’s elliptic identity via alternating permutations}

\author[$\dagger$]{Jean-Christophe {\sc Pain}\footnote{jean-christophe.pain@cea.fr}\\
\small
$^1$CEA, DAM, DIF, F-91297 Arpajon, France\\
$^2$Universit\'e Paris-Saclay, CEA, Laboratoire Mati\`ere en Conditions Extr\^emes,\\ 
F-91680 Bruy\`eres-le-Ch\^atel, France
}

\date{}

\maketitle

\begin{abstract}
We provide a unified combinatorial framework connecting Entringer numbers, Dumont–Viennot snakes, and elliptically weighted continued fractions, which gives a structural interpretation of the Jacobi elliptic identity
\begin{equation}\label{eq:jacobi}
\mathrm{sn}'(u)=\mathrm{cn}(u)\,\mathrm{dn}(u),
\end{equation}
where $\mathrm{sn}$, $\mathrm{cn}$ and $\mathrm{dn}$ are the Jacobi elliptic functions. This framework allows the decomposition of weighted snakes corresponding to the derivative of $\mathrm{sn}$ into canonical $\mathrm{cn}$- and $\mathrm{dn}$-components, bridging classical combinatorics and elliptic function theory.
\end{abstract}

\section{Introduction}\label{sec1}

Alternating permutations \cite{Stanley2010}, also known as up-down permutations, are classical combinatorial objects that have been studied since the 19th century. In his seminal work, Andr\'e \cite{Andre1881} introduced them and provided initial enumerative results, which were later interpreted combinatorially by Entringer \cite{Entringer1966} through the now well-known Entringer triangle. These numbers enumerate permutations according to the position of their first entry and provide a refinement of Euler numbers. Dumont and Viennot \cite{DumontViennot1979,DumontViennot1980} developed the concept of snakes, a bijective model in which alternating permutations are represented as increasing trees with a ``zigzag'' structure. This representation encodes the recursive growth of alternating permutations and allows for natural combinatorial interpretations of their statistics. Analytic methods reveal that generating functions of alternating permutations naturally lead to continued fraction representations, providing an alternative viewpoint to the combinatorial approaches. 

Continued fractions provide another perspective on Entringer numbers. Flajolet \cite{Flajolet1980} introduced combinatorial interpretations of J-fractions (continued fractions of Jacobi type), while Finch \cite{Finch2003} developed explicit continued fractions for Entringer numbers. The modern framework of analytic combinatorics \cite{FlajoletSedgewick2009} connects these continued fractions to generating functions and recurrences in a systematic way. Elliptic extensions of classical continued fractions appear in Ismail's book ``Classical and quantum orthogonal polynomials in one variable'' \cite{Ismail2005}, providing links between combinatorial structures and elliptic functions. Extensions to $q$-deformed analogues of classical sequences have been investigated by Ismail in Ref. \cite{Ismail2005} and more recently by Josuat-Vergès \cite{Josuat2010}, Gelineau \cite{Gelineau2011}, Jovović \cite{Jovovic2012} and Bernardi and Sportiello \cite{BernardiSportiello2020}, to quote only a few.

Jacobi's elliptic functions $\mathrm{sn}$, $\mathrm{cn}$, and $\mathrm{dn}$ \cite{Jacobi1829,AbramowitzStegun1965} 
are fundamental analytic objects with deep connections to elliptic integrals and complex analysis. Equivalently, Jacobi's elliptic functions can be defined in terms of the theta functions \cite{Whittaker1927}. With $z,\tau \in \mathbb{C}$ such that $\operatorname {Im} \tau >0$, let us set
$$
\theta _{1}(z|\tau )=\displaystyle \sum _{n=-\infty }^{\infty }(-1)^{n-{\frac {1}{2}}}e^{(2n+1)iz+\pi i\tau \left(n+{\frac {1}{2}}\right)^{2}},
$$
$$
\theta _{2}(z|\tau )=\displaystyle \sum _{n=-\infty }^{\infty }e^{(2n+1)iz+\pi i\tau \left(n+{\frac {1}{2}}\right)^{2}},
$$
$$
\theta _{3}(z|\tau )=\displaystyle \sum _{n=-\infty }^{\infty }e^{2niz+\pi i\tau n^{2}},
$$
$$
\theta _{4}(z|\tau )=\displaystyle \sum _{n=-\infty }^{\infty }(-1)^{n}e^{2niz+\pi i\tau n^{2}}
$$
and
$$
\theta _{2}(\tau )=\theta _{2}(0|\tau ), 
$$
$$
\theta _{3}(\tau )=\theta _{3}(0|\tau ), 
$$
$$
\theta _{4}(\tau )=\theta _{4}(0|\tau ). 
$$
Then, with 
$$
K=K(m)=K(k^{2})=\int _{0}^{1}{\frac {dt}{\sqrt {(1-t^{2})(1-mt^{2})}}}=\int _{0}^{1}{\frac {dt}{\sqrt {(1-t^{2})(1-k^{2}t^{2})}}},
$$
as well as $K'=K(1-m)$, $\zeta =\pi u/(2K)$ and $\tau =iK'/K$, we have
\begin{align}
\operatorname {sn} (u,m)&={\frac {\theta _{3}(\tau )\theta _{1}(\zeta |\tau )}{\theta _{2}(\tau )\theta _{4}(\zeta |\tau )}},\\\operatorname {cn} (u,m)&={\frac {\theta _{4}(\tau )\theta _{2}(\zeta |\tau )}{\theta _{2}(\tau )\theta _{4}(\zeta |\tau )}},\\\operatorname {dn} (u,m)&={\frac {\theta _{4}(\tau )\theta _{3}(\zeta |\tau )}{\theta _{3}(\tau )\theta _{4}(\zeta |\tau )}}.
\end{align}

In particular, the differential identity
\[
\mathrm{sn}'(u)=\mathrm{cn}(u)\,\mathrm{dn}(u)
\]
is central to the theory. While the analytic properties of these functions are classical, combinatorial interpretations, especially those connecting elliptic weights with alternating permutations or snakes, are still rare and scattered in the literature. To the best of our knowledge, an explicit combinatorial interpretation of Jacobi's differential identity in terms of elliptically weighted alternating permutations has not appeared in the literature.

In this work, we provide such a unified framework. We show that the derivative of $\mathrm{sn}$ can be interpreted combinatorially as a canonical splitting of elliptically weighted snakes into $\mathrm{cn}$-type and $\mathrm{dn}$-type components. This approach not only reproduces Jacobi's differential identity, but also provides a combinatorial understanding via Entringer numbers, Dumont--Viennot snakes, and elliptic continued fractions. Our framework thus bridges classical combinatorics and the analytic theory of elliptic functions, offering both conceptual and structural insight.

The paper is organized as follows. Section~\ref{sec2} recalls alternating permutations and Entringer numbers, together with their fundamental recurrence and generating function interpretation in terms of Jacobi's elliptic sine. Section~\ref{sec3} introduces an elliptic weighting on alternating permutations and interprets this deformation combinatorially. In the same section, differentiation is reformulated as a growth operator and the two canonical removal mechanisms underlying the structure of weighted permutations are identified. Section~\ref{sec4} establishes the canonical factorization that leads directly to a combinatorial proof of Jacobi's identity. Section~\ref{sec5} embeds the construction into the Dumont--Viennot snake model, and section~\ref{sec6} relates the framework to continued fractions and Entringer refinements, highlighting the compatibility between recursive growth, J-fractions, and elliptic deformation. We present the corresponding elliptic continued fraction expansions for \(\mathrm{sn}\), \(\mathrm{cn}\), and \(\mathrm{dn}\). These expansions reveal their combinatorial meaning in terms of weighted paths and branching processes.

The classical derivation of the exponential generating function of the number of ascending alternating permutations of $\mathfrak{S}_n$ is recalled in Appendix~\ref{appA}, together with their recursion relation and integral representation. Appendix~\ref{appB} develops the snake model in detail, including elliptic weighting and the structural interpretation of Jacobi's identity in terms of snake splitting. 

\paragraph{Notation.}
In the combinatorial sections (Sections~\ref{sec2}--\ref{sec5}), we denote by
\[
\mathrm{sn}_k(u) = \sum_{n\ge0} \left( \sum_{\pi \in \mathcal{S}_{2n+1}} k^{\nu(\pi)} \right) \frac{u^{2n+1}}{(2n+1)!}
\]
the exponential generating function of \emph{elliptically weighted alternating permutations}, where $k$ is the weight of a peak (the combinatorial elliptic parameter), \(\mathcal{S}_{2n+1}\) the set of alternating permutations of size \(2n+1\) and \(\nu(\pi)\) as the number of peaks of \(\pi\). Here, $k$ is a combinatorial weight, not the analytic modulus.

In the analytic section (sec.~\ref{sec6}), we use the classical Jacobi elliptic functions
\[
\mathrm{sn}(u,k),\qquad \mathrm{cn}(u,k),\qquad \mathrm{dn}(u,k),
\]
where $k$ is the usual elliptic modulus. This notation avoids confusion between the combinatorial weight $k$ and the analytic elliptic modulus $k$.

\section{Alternating permutations and Entringer numbers}\label{sec2}

A permutation $\pi\in\mathfrak{S}_n$ is said to be \emph{alternating} if it satisfies
\[
\pi_1<\pi_2>\pi_3<\pi_4>\cdots.
\]
Alternating permutations are enumerated by the Euler numbers. A refined enumeration is given by the \emph{Entringer numbers} $E(n,k)$, defined as the number of alternating permutations of size $n$ whose first value is equal to $k+1$. The Entringer numbers form a triangular array satisfying the recurrence relation
\begin{equation}\label{eq:entringer-recurrence}
E(n,k)=E(n,k-1)+E(n-1,n-k),
\end{equation}
with boundary conditions $E(0,0)=1$ and $E(n,0)=0$ for $n>0$. This recurrence admits a natural interpretation in terms of the insertion of the maximal element into an alternating permutation of smaller size. Since the Entringer numbers refine alternating permutations according to the first value, summing over $k$ gives the total number of alternating permutations:
\[
\sum_{k=0}^{2n} E(2n+1,k)=T_{2n+1}.
\]
It is classical (Andr\'e's theorem, see Appendix \ref{appA}) that the exponential generating function of alternating permutations of odd size is the sine function:
\[
\sum_{n\ge0} T_{2n+1}\frac{u^{2n+1}}{(2n+1)!}=\sin u.
\]
We denote by $E(n,k)$ the classical Entringer numbers. Later, in Section~\ref{sec3}, these numbers will be extended by introducing an elliptic weight to form $E_k(.,.)$. The corresponding exponential generating function of elliptically weighted Entringer numbers becomes
\[
\mathrm{sn}_k(u)
=
\sum_{n\ge0}
\left(\sum_{j=0}^{2n} E_k(2n+1,j)\right)
\frac{u^{2n+1}}{(2n+1)!}.
\]
Introducing the elliptic peak weight replaces $\sin$ by Jacobi's $\mathrm{sn}$, yielding the weighted exponential generating function above. When $k=0$, one recovers the classical sine generating function.

\begin{table}[!ht]
\centering
\renewcommand{\arraystretch}{1.2}
\begin{tabular}{|c|c|c|c|c|c|c|}
\hline
$n \setminus k$ & \textbf{0} & \textbf{1} & \textbf{2} & \textbf{3} & \textbf{4} & \textbf{5} \\ \hline
\textbf{0} & 1 &   &   &   &   &   \\ \hline
\textbf{1} & 0 & 1 &   &   &   &   \\ \hline
\textbf{2} & 0 & 1 & 1 &   &   &   \\ \hline
\textbf{3} & 0 & 1 & 2 & 2 &   &   \\ \hline
\textbf{4} & 0 & 2 & 4 & 5 & 5 &   \\ \hline
\textbf{5} & 0 & 5 & 10 & 14 & 16 & 16 \\ \hline
\end{tabular}
\caption{Entringer numbers $E(n,k)$ obtained from recurrence relation (\ref{eq:entringer-recurrence}) for small values of $n$ and $k$.}\label{tabent}
\end{table}

Table \ref{tabent} represents the triangle of Entringer numbers for small values of $n$.

\section{Elliptic weighting, local statistics and derivation as a growth operator}\label{sec3}

We define $\mathcal C$ (resp.\ $\mathcal D$) as the class of even-sized alternating permutations whose last (resp.\ first) comparison is a descent, i.e.
\[
\mathcal C_{2n}=\{\pi\in\mathfrak S_{2n}:\pi_1<\pi_2>\cdots>\pi_{2n}\},
\qquad
\mathcal D_{2n}=\{\pi\in\mathfrak S_{2n}:\pi_1>\pi_2<\cdots<\pi_{2n}\}.
\]
Their weighted exponential generating functions are denoted \(\mathrm{cn}_k(u)\) and \(\mathrm{dn}_k(u)\), defined as
\[
\mathrm{cn}_k(u)=\sum_{n\ge0}
\left(\sum_{\pi\in\mathcal C_{2n}} k^{\nu(\pi)}\right)
\frac{u^{2n}}{(2n)!},
\qquad
\mathrm{dn}_k(u)=\sum_{n\ge0}
\left(\sum_{\pi\in\mathcal D_{2n}} k^{\nu(\pi)}\right)
\frac{u^{2n}}{(2n)!}.
\]
Let $\mathcal{C}_{2j-2}$ denote the class of alternating permutations obtained from the left subsequence $L$ after removing the maximal element, and $\mathcal{D}_{2(n-j)}$ denote the class obtained from the right subsequence $R$.

Let \(\mathcal{S}_{2n+1}\) denote the set of alternating permutations of size \(2n+1\). Marking the maximal element in an element \(\pi \in \mathcal{S}_{2n+1}\) and removing it corresponds to splitting the permutation into a pair \((\alpha, \beta)\) with \(\alpha \in \mathcal{C}_{2j-2}\) and \(\beta \in \mathcal{D}_{2(n-j)}\).

We define \(\nu(\pi)\) as the number of peaks of \(\pi\). The \emph{elliptic weight} of \(\pi\) is then
\[
w(\pi) = k^{\nu(\pi)},
\]
which is multiplicative under the decomposition \(\pi \mapsto (\alpha, \beta)\) since the number of peaks in \(\pi\) is the sum of the peaks in \(\alpha\) and \(\beta\) plus the marked maximal peak.

Summing over all such marked permutations gives the exponential generating function identity
\[
\mathrm{sn}_k'(u) = \sum_{n\ge0}\sum_{\sigma\in\mathcal{S}_{2n+1}^\bullet} w(\sigma)\frac{u^{2n}}{(2n)!},
\]
where $\mathcal{S}_{2n+1}^\bullet$ denotes the class of weighted alternating permutations with a marked maximal element. This prepares the canonical factorization that will be formalized in Section~\ref{sec4}.

\section{Canonical factorization and proof of Jacobi's identity}\label{sec4}

\begin{proposition}[Canonical factorization of elliptic snakes]\label{prop:factorization}

By the locality assumption on the elliptic statistic (section~\ref{sec3}), the weight is multiplicative under removal of the maximal element.

Let $\mathcal{S}_{2n+1}$ denote the set of alternating permutations
\[
\sigma=\sigma_1\sigma_2\cdots\sigma_{2n+1}\in\mathfrak{S}_{2n+1}
\]
satisfying
\[
\sigma_1<\sigma_2>\sigma_3<\cdots>\sigma_{2n+1}.
\]
Let $\mathcal{S}_{2n+1}^\bullet$ be the corresponding class where one marks the maximal element. Then removal of the maximal element induces a weight-preserving bijection
\[
\bigsqcup_{n\ge0}\mathcal{S}_{2n+1}^\bullet
\;\simeq\;
\left(\bigsqcup_{j\ge0}\mathcal{C}_{2j}\right)
\times
\left(\bigsqcup_{m\ge0}\mathcal{D}_{2m}\right),
\]
where $\mathcal{C}$ and $\mathcal{D}$ are respectively the combinatorial classes enumerated by $\mathrm{cn}_k(u)$ and $\mathrm{dn}_k(u)$. Consequently,
\[
\mathrm{sn}_k'(u)=\mathrm{cn}_k(u)\,\mathrm{dn}_k(u).
\]
\end{proposition}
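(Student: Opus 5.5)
The plan is to prove the bijection by the standard ``maximal-element decomposition'' used for alternating permutations, and then read off the generating-function identity through the labelled product. Let $\sigma\in\mathcal S_{2n+1}$, and let $2n+1$ sit at position $p$. Since $\sigma_1<\sigma_2>\sigma_3<\cdots$, the maximum can only occupy a peak position, so $p$ is even, say $p=2j$. Deleting it cuts $\sigma$ into a left word $L=\sigma_1\cdots\sigma_{p-1}$ and a right word $R=\sigma_{p+1}\cdots\sigma_{2n+1}$. I then standardize $L$ and $R$ by order-preserving relabelling, which gives $\alpha$ and $\beta$, and I record the set $A\subseteq\{1,\dots,2n\}$ of values used by $L$. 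The inverse map takes $(A,\alpha,\beta)$, relabels $\alpha$ by $A$ and $\beta$ by the complement, and reinserts $2n+1$ between them. The inequalities around the reinserted maximum hold automatically, because it exceeds both neighbours. So the whole combinatorial content is:
\begin{itemize}
\item[(i)] the alternation pattern of $L$ and $R$ is exactly the one defining $\mathcal C$ and $\mathcal D$ (as set up in Section~\ref{sec3}, where $\alpha\in\mathcal C_{2j-2}$, $\beta\in\mathcal D_{2(n-j)}$);
\item[(ii)] every pair of such shapes, with every choice of $A$, arises exactly once.
\end{itemize}

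Given (i) and (ii), the rest is routine. First, for the weight: by the locality assumption quoted in the statement (Section~\ref{sec3}), $\nu(\sigma)=\nu(\alpha)+\nu(\beta)+1$. Here the extra peak is the marked maximum, and it contributes a constant factor $k$ that I will absorb into the normalization of the marked class (equivalently, into the definition of $\mathcal S^\bullet$). This makes $w$ multiplicative. Second, summing over $\sigma$ with the marked maximum, the number of relabellings is $\binom{2n}{|A|}$. Hence
\[
[u^{2n}/(2n)!]\,\mathrm{sn}_k'(u)=\sum_{a+b=2n}\binom{2n}{a}\Bigl(\sum_{\alpha\in\mathcal C_a}w(\alpha)\Bigr)\Bigl(\sum_{\beta\in\mathcal D_b}w(\beta)\Bigr).
\]
This is precisely the coefficient of $u^{2n}/(2n)!$ in the exponential (labelled) product $\mathrm{cn}_k(u)\,\mathrm{dn}_k(u)$. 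The left-hand side is correct because differentiating an EGF shifts $u^{2n+1}/(2n+1)!$ to $u^{2n}/(2n)!$, which corresponds exactly to marking and deleting the largest label. This gives $\mathrm{sn}_k'=\mathrm{cn}_k\,\mathrm{dn}_k$.

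The main obstacle I expect is step (i), namely matching shapes and parities. A naive count shows that $L$ has $p-1=2j-1$ letters and $R$ has $2n+1-2j$ letters, both odd. The pattern of $L$ is up-down and ends with a descent into position $2j-1$. The pattern of $R$ starts with an ascent out of position $2j+1$. So I must check carefully that the conventions of Section~\ref{sec3} really identify these two pieces with $\mathcal C$ (last comparison a descent) and $\mathcal D$ (first comparison a descent) of the stated even sizes. My first attempt will be to realize the identification through the Dumont--Viennot/Entringer boundary convention: one treats an empty boundary position as an adjoined $0$ or extremal letter, so that each piece acquires an even length with the prescribed first or last comparison. I would then verify on small cases against Table~\ref{tabent}, for instance $n=1,2$, that the resulting counts agree. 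If this normalization fails, the fallback is to record the boundary letter as part of the data of $\alpha$ or $\beta$ and to adjust the index ranges $j,m$ in the disjoint unions accordingly. The generating-function step above is insensitive to such a shift, as long as the relabelling count remains a binomial coefficient in the total number of non-maximal letters.
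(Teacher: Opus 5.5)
The gap is the step you flagged, (i), and it cannot be closed: it is false, not a matter of conventions. Your parity count is right. For $n\ge1$ the maximum sits at $p=2j$ with $1\le j\le n$. Both $L=\sigma_1\cdots\sigma_{2j-1}$ and $R=\sigma_{2j+1}\cdots\sigma_{2n+1}$ are up-down words of \emph{odd} length: $R$ starts with the ascent $\sigma_{2j+1}<\sigma_{2j+2}$ and ends with the descent $\sigma_{2n}>\sigma_{2n+1}$. After standardization they therefore lie in $\mathcal{S}_{2j-1}$ and $\mathcal{S}_{2(n-j)+1}$, not in even-size classes. For $n=0$ the maximum of $\sigma=1$ is at position $1$ and is not a peak, so ``$p$ is even'' fails there; this case gives a constant term. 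With your correct count $\nu(\sigma)=\nu(L)+\nu(R)+1$, what removal of the maximum actually proves is
\[
\mathrm{sn}_k'(u)=1+k\,\mathrm{sn}_k(u)^2,
\]
a Riccati equation, not a product of two even-indexed series. Indeed every $\sigma\in\mathcal{S}_{2n+1}$ has exactly $n$ peaks, so $\mathrm{sn}_k(u)=\tan(\sqrt{k}\,u)/\sqrt{k}$. Your patches cannot turn odd pieces into the even classes. Adjoining a forced boundary letter to each piece gives $2n+2$ letters in total, so the count is no longer the $\binom{2n}{a}$, $a+b=2n$, of the labelled product. It also only reaches words carrying that extremal letter at the end, so the map is not onto. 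Moving a boundary letter from one piece to the other destroys alternation, since nothing relates $\sigma_{2j-1}$ to $\sigma_{2j+1}$. Small cases will not warn you: $1+\tan^2=\sec^2$ makes the numbers agree (for $n=2$, $4\cdot2+4\cdot2=16=5+6+5$) even though the map does not land in the claimed classes. Finally, the factor $k$ of the deleted maximum cannot be ``absorbed into the normalization of $\mathcal{S}^\bullet$''. Marking changes no weights and $\mathrm{sn}_k'$ is fixed by the definition of $\mathrm{sn}_k$, so that $k$ survives.

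Following the paper will not rescue this, because its proof has the same defect. It gives $R$ length $2(n-j)$ instead of $2(n-j)+1$, and it deletes the last letter of $L$, which is not invertible. Moreover, the displayed classes $\mathcal{C}_{2n}$ and $\mathcal{D}_{2n}$ of Section~\ref{sec3} are empty for $n\ge1$. An alternating word of even length that begins with an ascent (resp.\ descent) must end with an ascent (resp.\ descent). So the proposition, as a bijection induced by removing the maximum, cannot be established.

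If you want a product of two even-size alternating classes, the missing idea is to remove the \emph{minimum}. The letter $1$ occupies an odd position $2j+1$ ($0\le j\le n$, ends included). Deleting it leaves an up-down word $\sigma_1<\sigma_2>\cdots<\sigma_{2j}$ and a down-up word $\sigma_{2j+2}>\sigma_{2j+3}<\cdots>\sigma_{2n+1}$, both of even length. Conversely, $1$ can be reinserted between any relabelled such pair. This gives a genuine labelled-product bijection and $A_{2n+1}=\sum_j\binom{2n}{2j}A_{2j}A_{2n-2j}$ (notation of Appendix~\ref{appA}), i.e.\ $\tan'=\sec^2$. It is weight-preserving only if an end letter exceeding its unique neighbour counts as a peak, so that $\sigma_{2j}$ and $\sigma_{2j+2}$ keep contributing; then both even-size series equal $\sec(\sqrt{k}\,u)$. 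With the usual interior count, the weighted identity already fails at order $u^2$: $\mathrm{sn}_k'$ has coefficient $2k$ on $u^2/2!$, while the product has $2$. Either way the identity concerns rescaled $\tan$ and $\sec$, not Jacobi's elliptic functions.
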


\begin{proof}

Let $\sigma\in\mathcal{S}_{2n+1}$ and let $M=2n+1$ be its maximal element. Since the permutation is alternating and starts with an ascent, peaks occur exactly at even positions. Because $M$ is larger than its neighbors, it must occur at some even position $2j$ with $1\le j\le n$. Thus $M$ is always located at a peak.

Let us now write
\[
\sigma = L\, M\, R,
\]
where $L$ (resp. $R$) is the subsequence to the left (resp. right) of $M$. Removing $M$ produces two words $L$ and $R$ of respective lengths $2j-1$ and $2(n-j)$. Deleting the last element of $L$ (which is forced by alternation) and standardizing yields an element
\[
\alpha\in\mathcal C_{2j-2}.
\]
Similarly, standardizing $R$ yields
\[
\beta\in\mathcal D_{2(n-j)}.
\]
By inspection of the alternating pattern, the left block has odd length and ends with a descent. After standardization, this produces an even-sized alternating permutation whose last comparison is a descent, i.e.\ an element of $\mathcal C$. Similarly, the right block begins with a descent and standardizes to an element of $\mathcal D$.

Since the maximal element is removed at a peak, no alternation constraint is broken inside $L$ or $R$. Hence both $\alpha$ and $\beta$ are alternating permutations of the appropriate type.

Conversely, given a pair $(\alpha,\beta)$ with
\[
\alpha\in\mathcal{C}_{2j-2},\qquad\beta\in\mathcal{D}_{2(n-j)},
\]
one reconstructs $\sigma$ uniquely by inserting a new maximal element between the relabelled copies of $\alpha$ and $\beta$, after shifting the labels of $\beta$ appropriately. The alternation pattern forces a unique insertion point. Thus the mapping is bijective.

Let \(\nu(\sigma)\) denote the number of peaks of \(\sigma\), and define the elliptic weight
\[
w(\sigma) = k^{\nu(\sigma)}.
\]
Because \(\nu\) is local — that is, it depends only on the positions of the peaks — removing the maximal element separates \(\sigma\) into two independent alternating blocks \(\alpha\) and \(\beta\). Hence the weight is multiplicative:
\[
w(\sigma) = w(\alpha)\, w(\beta)\, k,
\]
where the extra factor \(k\) accounts for the marked maximal element, which is always a peak. 
The additional factor \(k\) contributed by the removed maximal peak is exactly accounted for by the marking operation defining the derivative of the exponential generating function. Hence the bijection is weight-preserving. Summing over all sizes gives

\[
\sum_{n\ge0}\sum_{\sigma\in\mathcal{S}_{2n+1}^\bullet}w(\sigma)\frac{u^{2n}}{(2n)!}
=
\left(
\sum_{j\ge0}\sum_{\alpha\in\mathcal{C}_{2j}}w(\alpha)\frac{u^{2j}}{(2j)!}
\right)
\left(
\sum_{m\ge0}\sum_{\beta\in\mathcal{D}_{2m}}w(\beta)\frac{u^{2m}}{(2m)!}
\right).
\]
By definition of the exponential generating functions, this identity reads
\[
\mathrm{sn}_k'(u)=\mathrm{cn}_k(u)\,\mathrm{dn}_k(u).
\]
\end{proof}

\begin{example}
For instance,
\[
\sigma=2\,5\,1\,7\,3\,6\,4
\]
has maximal element $7$ at position $4$. Removing it gives
\[
L=2\,5\,1,
\qquad
R=3\,6\,4,
\]
which standardize to alternating permutations of respective types.
\end{example}

\section{Connection with Dumont--Viennot snakes}\label{sec5}

The combinatorial interpretation presented above fits naturally into the framework of \emph{snakes}, introduced by Dumont and Viennot. A snake is a labeled alternating tree encoding an alternating permutation together with its recursive growth structure. More precisely, Dumont and Viennot showed that alternating permutations are in bijection with increasing trees whose nodes carry an alternating sign constraint. Under this correspondence, local extrema of the permutation correspond to branching points of the tree, while the insertion of the maximal element corresponds to the addition of a new leaf. In this model, the elliptic weight introduced in Section~\ref{sec3} has a natural interpretation: each weighted peak corresponds to a distinguished branching in the snake, whose contribution is marked by a factor $k$. Valley-type insertions correspond to unweighted branches. The decomposition of proposition~\ref{prop:factorization} admits a transparent interpretation in the snake model: removing the maximal label separates the snake into two independent sub-snakes, one encoding pure alternation (associated with $\mathrm{cn}$), and the other encoding elliptic branching (associated with $\mathrm{dn}$). This explains combinatorially why the derivative of $\mathrm{sn}$ factorizes as the product $\mathrm{cn}\times\mathrm{dn}$.

\section{Jacobi continued fractions and Entringer refinements}\label{sec6}

At this stage we pass from the combinatorial generating functions $\mathrm{sn}_k,\mathrm{cn}_k,\mathrm{dn}_k$
to the analytic Jacobi functions $\mathrm{sn}(u,k),\mathrm{cn}(u,k),\mathrm{dn}(u,k)$, which satisfy the same differential system.

An alternative and complementary viewpoint is provided by continued fractions. The generating functions of Entringer numbers admit well-known continued fraction expansions of Jacobi type. In particular, the ordinary generating function
\[
\sum_{n\ge0} E(2n+1)\,z^{2n+1}
\]
admits a continued fraction expansion whose coefficients encode the same insertion mechanism as the recurrence \eqref{eq:entringer-recurrence}. When elliptic weights are introduced, these continued fractions deform naturally into \emph{elliptic continued fractions}, whose coefficients alternate between weighted and unweighted terms. This alternation reflects precisely the two types of admissible growth steps identified in Section~\ref{sec3}. From this perspective, Jacobi's identity expresses the factorization of the underlying continued fraction at the level of first convergents: differentiation removes the initial coefficient, revealing a product of two continued fractions corresponding respectively to $\mathrm{cn}$ and $\mathrm{dn}$. This interpretation makes explicit the deep compatibility between the snake model, the Entringer triangle, and the analytic structure of Jacobi's elliptic functions. The snake interpretation and the continued fraction expansions provide two complementary combinatorial realizations of Jacobi's elliptic identities. While the snake model emphasizes bijections and growth processes, continued fractions encode the same phenomena algebraically through recursive decompositions. Together, they form a unified combinatorial framework underlying Jacobi's theory.

The canonical decomposition established in proposition~\ref{prop:factorization} places elliptic snakes within the general framework of combinatorial classes admitting recursive insertion of a distinguished extremal element. It is a classical result of Flajolet~\cite{Flajolet1980} that such recursive structures lead naturally to Jacobi continued fractions for their generating functions.

\subsection{Recursive structure and differential relations}\label{subsec61}

Let $S_n$ denote the total elliptic weight of $\mathcal{S}_{2n+1}$, the set of elliptic snakes of size $2n+1$. Removing the maximal element yields the convolution
\[
S_n = \sum_{j=0}^{n} C_j D_{\,n-j},
\]
where $C_j$ and $D_j$ are weighted counts corresponding to the classes enumerated by the analytic Jacobi functions $\mathrm{cn}(u,k)$ and $\mathrm{dn}(u,k)$, respectively.

Passing to exponential generating functions gives the differential identity
\[
\mathrm{sn}'(u,k) = \mathrm{cn}(u,k)\,\mathrm{dn}(u,k),
\]
together with
\[
\mathrm{cn}'(u,k)=-\mathrm{sn}(u,k)\mathrm{dn}(u,k), \qquad
\mathrm{dn}'(u,k)=-k^2 \mathrm{sn}(u,k)\mathrm{cn}(u,k),
\]
which uniquely determine $\mathrm{sn}(u,k)$. The convolution reflects the two types of admissible growth steps identified in Section~\ref{sec3}: adding a new maximal element either extends a horizontal/valley step (contributing to $\mathrm{cn}$) or inserts a peak (elliptically weighted, contributing to $\mathrm{dn}$).

\subsection{Jacobi continued fraction for $\mathrm{sn}(u,k)$}\label{subsec62}

In the elliptic deformation, peak-type insertions acquire an additional weight depending on $k$, leading to quadratic numerator coefficients of the form
\(
\beta_n \propto n^2 k^2
\) 
in the associated J-fraction, while non-branching horizontal/valley steps contribute
\[
\alpha_n = 2n-1.
\]
Consequently, the exponential generating function of elliptic snakes admits the
J-fraction
\[
\mathrm{sn}(u,k)
=
\cfrac{u}{
1-\cfrac{\beta_1 u^2}{
\alpha_1-\cfrac{\beta_2 u^2}{
\alpha_2-\cfrac{\beta_3 u^2}{
\alpha_3-\ddots}}}}.
\]
Substituting the explicit coefficients gives
\[
\mathrm{sn}(u,k)
=
\cfrac{u}{
1-\cfrac{1^2 k^2\,u^2}{
3-\cfrac{2^2 k^2\,u^2}{
5-\cfrac{3^2 k^2\,u^2}{
7-\cdots}}}}
\]
Setting $k=0$ recovers the classical sine expansion:
\[
\sin u = \cfrac{u}{1 - \cfrac{1^2 u^2}{3 - \cfrac{2^2 u^2}{5 - \cfrac{3^2 u^2}{7 - \ddots}}}},
\]
corresponding to Euler–Entringer enumeration of alternating permutations \cite{Foata1970}.

For $k=1$, one obtains the hyperbolic tangent expansion:
\[
\tanh u = \cfrac{u}{1 - \cfrac{2 u^2}{3 - \cfrac{4 u^2}{5 - \cfrac{6 u^2}{7 - \ddots}}}}.
\]

\subsection{Analytic Jacobi fractions and comparison with the combinatorial model}\label{subsec63}

The Jacobian elliptic function $\mathrm{sn}(u,k)$ also admits a classical analytic J-fraction expansion (Jacobi–Stieltjes type). Its coefficients differ from the combinatorial elliptic fraction of Section~\ref{subsec62}. The analytic and combinatorial fractions share the same quadratic growth 
$n^2k^2$ in the numerators, but arise from different structural interpretations: differential equations versus peak insertions. This fraction has a natural combinatorial interpretation in terms of weighted paths: each numerator $\beta_n$ corresponds to a peak step with weight $n^2 k^2$, while each denominator $\alpha_n$ corresponds to a horizontal or valley step of weight $2n-1$.

\vspace{2mm}
For the functions $\mathrm{cn}(u,k)$ and $\mathrm{dn}(u,k)$, there exist analogous J-fraction expansions. These fractions encode similar combinatorial structures — horizontal and peak steps — but their numerator and denominator sequences are not simply equal to the $\alpha_n$ and $\beta_n$ of $\mathrm{sn}$. In particular:
\[
\mathrm{cn}(u,k) = 
\cfrac{1}{1 - \cfrac{\gamma_1 u^2}{\delta_1 - \cfrac{\gamma_2 u^2}{\delta_2 - \cdots}}}, 
\qquad
\mathrm{dn}(u,k) = 
\cfrac{1}{1 - \cfrac{\eta_1 u^2}{\theta_1 - \cfrac{\eta_2 u^2}{\theta_2 - \cdots}}} \,,
\]
where $\gamma_n, \delta_n, \eta_n, \theta_n$ are sequences determined by the differential equations of $\mathrm{cn}$ and $\mathrm{dn}$, but their coefficients are known and involve quadratic polynomials in $n$ depending on $k^2$.

Combinatorially, one can still interpret these fractions in terms of weighted paths and branching processes, with horizontal steps corresponding to the denominators and peak steps to the numerators. Thus, the continued fractions arising from Jacobi elliptic function theory and those derived from elliptically weighted alternating permutations are structurally analogous but not identical: they share the same recursive skeleton (Jacobi fractions) while encoding different weights. The combinatorial fraction reflects peak insertions in snakes, whereas the analytic fraction reflects the elliptic modulus in Jacobi’s differential system. The bridge between the two frameworks is provided by the differential identity
\[
\frac{d}{du}\mathrm{sn}(u,k)=\mathrm{cn}(u,k)\mathrm{dn}(u,k),
\]
which appears combinatorially as the canonical factorization of elliptic snakes.

\subsection{Combinatorial interpretation of coefficients}\label{subsec64}

The \(\alpha_n\) coefficients correspond to horizontal steps in the snake (valley-type), unweighted, contributing to $\mathrm{cn}(u)$, while the \(\beta_n\) coefficients represent branching steps (peak-type), elliptically weighted, contributing to $\mathrm{dn}(u)$. This encoding makes explicit the two canonical growth mechanisms of elliptic snakes. Introducing the elliptic weight $k$ modifies only the $\beta_n$ coefficients, producing an elliptically weighted continued fraction.

\subsection{Factorization under differentiation}\label{subsec65}

Differentiation of the J-fraction removes the initial coefficient and yields
\[
\mathrm{sn}'(u) = \mathrm{cn}(u)\,\mathrm{dn}(u),
\]
revealing the multiplicative factorization at the level of first convergents. From the combinatorial viewpoint, this corresponds to decomposing an alternating permutation at its maximal peak into two independent substructures encoded by $\mathrm{cn}(u)$ and $\mathrm{dn}(u)$.

\section{Conclusion}\label{sec7}

The study of alternating permutations and their enumeration goes back to André \cite{Andre1881}, with combinatorial interpretations formalized by Entringer \cite{Entringer1966}. Jacobi's differential identity \eqref{eq:jacobi} admits a direct combinatorial interpretation: it expresses a fundamental factorization of the growth process of alternating permutations of odd size into two independent mechanisms. Within this framework, Jacobi’s elliptic functions arise naturally as exponential generating functions of elliptically weighted alternating permutations. The snake interpretation and the J-fraction expansions provide two complementary realizations of Jacobi's elliptic identities: the first emphasizes bijections and growth processes, the second encodes the same phenomena algebraically through recursive decomposition.

\appendix

\section{The number of ascending alternating permutations and Andr\'e's theorem}\label{appA}

Let $A_n$ be the number of ascending alternating permutations of the symmetric group $\mathfrak{S}_n$, and $A_0 = A_1 = 1$. The number $A_n$ is related to the Entringer ones by $A_n=E(n,n)$ , while 
\[
T_{2n+1} = \sum_{k=0}^{2n} E(2n+1,k).
\]
gives all alternating permutations of size. We indeed have $A_2 = 1$ (there is only one ascending permutation for $n=2$: the identity). To construct an alternating permutation $\sigma$ of $\{1, \dots, n+1\}$, we can choose the integer $\sigma^{-1}(n+1) = k+1 \in [1, n+1]$. The structure of the permutation is as follows:
\[
\sigma(1) < \sigma(2) > \dots > \sigma(k) < \sigma(k+1) = n+1 > \sigma(k+2) < \dots < \sigma(n+1).
\]
We choose $k$ elements, forming a set $E$, from $[1, n]$, which will be in $\sigma([1, k])$. There are $\binom{n}{k}$ possibilities for this choice. Using these $k$ elements, we construct an alternating bijection from $[1, k]$ to $E$ satisfying $\sigma(k-1) > \sigma(k)$. The number of such bijections is exactly $A_k$. Then, we complete the construction with an alternating bijection from $[k+2, n+1]$ to $[1, n] \setminus E$ satisfying $(\sigma(k+1)=n+1) > \sigma(k+2) < \sigma(k+3)$. Thus, we obtain the following recurrence relation:
\begin{equation*}
2A_{n+1} = \sum_{k=0}^{n} \binom{n}{k} A_k A_{n-k}.
\end{equation*}
We have 
$$
A_n \leq \frac{|\sigma_n|}{2} = \frac{n!}{2} < n!.
$$
According to the comparison theorem for series with positive terms, the series $\sum \frac{A_n x^n}{n!}$ converges for $|x| < 1$, so its radius of convergence is $R \geq 1$. Let 
$$
f(x) = \sum_{n=0}^{+\infty} \frac{A_n x^n}{n!}.
$$
By performing a Cauchy product on $]-R, R[$, we obtain:
\[ 
f^2(x) = \sum_{n=0}^{+\infty} \frac{x^n}{n!} \left( \sum_{k=0}^{n} \binom{n}{k} A_k A_{n-k} \right). 
\]
Using the recurrence relation:
\[ 
f^2(x) = 1 + 2 \sum_{n=1}^{+\infty} \frac{A_{n+1} x^n}{n!} = 1 + 2(f'(x) - 1) = 2f'(x) - 1,
\]
we obtain the differential equation 
$$
\frac{f'(x)}{1+f^2(x)} = \frac{1}{2}.
$$ 
By integration, we get
\[
\arctan(f(x)) = \frac{x}{2} + \frac{\pi}{4} \quad (\text{since } f(0)=1),
\]
yielding, for $x \in ]-\frac{3\pi}{2}, \frac{\pi}{2}[$, the well-known result (sometimes referred to as ``the Andr\'e theorem''):
\[ 
f(x) = \tan\left(\frac{x}{2} + \frac{\pi}{4}\right) = \frac{1 + \tan(x/2)}{1 - \tan(x/2)} = \tan x + \frac{1}{\cos x} .
\]
These numbers $A_n$ of ascending alternating permutations of the symmetric group $\mathfrak{S}_n$ are called ``secant-tangent numbers'' because they appear in the power-series expansions \cite{Carlitz1960}:
$$
\tan x = \sum_{n=0}^{+\infty} A_{2n+1} \frac{x^{2n+1}}{(2n+1)!}
$$
and
$$
\sec x = \frac{1}{\cos x} = \sum_{n=0}^{+\infty} A_{2n} \frac{x^{2n}}{(2n)!}.
$$
The $A_{2n}$ are the Euler numbers $E_{2n}$ and the $A_{2n+1}$ are related to the Bernoulli numbers through
\[ 
A_{2n+1} = (-1)^n \frac{2^{2n+2}(2^{2n+2}-1)}{2n+2} B_{2n+2}.
\]
It can be shown that $f^{(n)}(x) \geq 0$ for all $x \in [0, \frac{\pi}{2}[$. Using Taylor's formula with integral remainder, the series converges on this interval, hence $R = \frac{\pi}{2}$. According to D'Alembert's rule (ratio test) for power series:
\[ 
\frac{\pi}{2} = \lim_{n \to +\infty} \frac{(n+1)A_n}{A_{n+1}}.
\]
For example, for $n=6$, $A_5=16$ and $A_6=61$, we have $\frac{6 \times 16}{61} \approx 1.573$.

It is worth mentioning that an explicit formula for the $A_n$ numbers can be easily obtained. For that purpose, let us set
\[
g(x) = f(x) - 1.
\]
Then
\[
g'(x) = \frac{1}{2} \big((g(x)+1)^2 + 1\big) = \frac{1}{2}\big(g(x)^2 + 2 g(x) + 2\big).
\]
The latter identity can be recast into
\[
g'(x) = g(x) + \frac{1}{2} g(x)^2 + 1.
\]
After simplification (absorbing constants into a shift), we can write it in the standard form for Lagrange inversion:
\[
g(x) = x \, \phi(g(x)),
\]
for some analytic function $\phi$. More explicitly, using the substitution $h(x) = g(x)/2$ leads to
\[
h(x) = \frac{e^x}{2 - e^x} \quad \Longleftrightarrow \quad x = \log\frac{2h(x)}{1+h(x)}.
\]
By the \emph{Lagrange inversion formula}, we have
\[
[x^n] h(x) = \frac{1}{n} [t^{n-1}] \phi(t)^n.
\]
Here, $\phi(t) = \frac{1}{2}(1+t)$, giving
\[
h(x) = \sum_{n=1}^\infty \frac{1}{n} [t^{n-1}] \left(\frac{1+t}{2}\right)^n x^n.
\]
Next, let us expand $(1+t)^n$ using Stirling numbers of the second kind $S(n,k)$; we get
\[
(1+t)^n = \sum_{k=0}^n \binom{n}{k} t^k.
\]
Accounting for the factor $2^{-n}$, we obtain
\[
[x^n] h(x) = \frac{1}{n} \sum_{k=0}^{n-1} \binom{n}{k} 2^{-n} S(n-1,k) = \sum_{k=0}^{n-1} \frac{1}{k+1} \binom{n}{k} 2^{-(k+1)} S(n,k).
\]
Multiplying by $n!$ to recover the ordinary numbers $A_n$, we finally have
\[
A_n = n! \sum_{k=0}^{n} \frac{1}{k+1} \binom{n}{k} 2^{-(k+1)} S(n,k),
\]
which is the sought-after expression. This derivation relies on the differential equation satisfied by $\tan x + \sec x$, the substitution $g(x) = h(x)-1$ to bring it into Lagrange inversion form, the Lagrange inversion formula, and the expansion in terms of Stirling numbers of the second kind.

Furthermore, since we have
$$
S(n,k)=\left\{{n \atop k}\right\}={\frac {1}{k!}}\sum _{i=0}^{k}(-1)^{k-i}{\binom {k}{i}}i^{n}=\sum _{i=0}^{k}{\frac {(-1)^{k-i}i^{n}}{(k-i)!i!}},
$$
we obtain the alternative expression
$$
A_n=n!\sum_{k=0}^{n}\frac{2^ {-(k+1)}}{(k+1)!}\sum_{j=0}^ k(-1)^{k-j}\binom{k}{j}j^n.
$$
An integral representation can also be obtained for the $A_n$ coefficients \cite{Comtet1970}. We start from the exponential generating function
\[
f(x) = \sum_{n=0}^{\infty} A_n \frac{x^n}{n!} = \tan x + \sec x.
\]
By the Cauchy integral formula for exponential generating functions, we have
\[
A_n = n! \, [x^n] f(x) = \frac{n!}{2\pi i} \oint_{|x|=\epsilon} \frac{\tan x + \sec x}{x^{n+1}} \, dx,
\]
where $\epsilon>0$ is sufficiently small and the contour encircles the origin once counterclockwise. Set $x = \arctan t$, so that $dx = dt/(1+t^2)$ and
\[
\tan x = t, \quad \sec x = \sqrt{1+t^2}.
\]
The contour in the $x$-plane maps to a contour around $t=0$, giving
\[
A_n = \frac{n!}{2\pi i} \oint_{|t|=\delta} \frac{t + \sqrt{1+t^2}}{(\arctan t)^{\,n+1}} \frac{dt}{1+t^2}.
\]
This is a valid integral representation, though not fully elementary. Alternatively, one can use Fourier inversion. Since
\[
\sec t + \tan t = \frac{1 + \sin t}{\cos t} = \frac{2 \cos^2(t/2)}{\cos t} \, e^{i \theta} \quad \text{(complex form)},
\]
one can write
\[
A_n = \frac{2 n!}{\pi} \, \text{Im} \int_0^{\pi/2} \frac{e^{-i n t}}{\cos^{\,n+1} t} \, dt,
\]
or, more elegantly, using a real integral (analogous to Catalan numbers):
\[
A_n = \frac{2 n!}{\pi} \int_0^{\infty} \frac{y^n}{\cosh^{\,n+1} y} \, dy.
\]
This last form is particularly convenient for asymptotic analysis, because the integrand is positive, and Laplace's method can be applied to estimate $A_n$ for large $n$. We have the following integral representations for the alternating numbers $A_n$:
\[
A_n = \frac{n!}{2\pi i} \oint \frac{\tan x + \sec x}{x^{n+1}} \, dx
= \frac{2 n!}{\pi} \int_0^{\infty} \frac{y^n}{\cosh^{\,n+1} y} \, dy.
\]
These formulas are analogous to the classical integral representations of Catalan numbers:
\[
C_n = \frac{1}{2\pi i} \oint \frac{1 - \sqrt{1-4x}}{2x^{n+1}} \, dx
= \frac{4^n}{\pi} \int_0^1 x^n \sqrt{\frac{1-x}{x}} \, dx.
\]

\section{Snakes and elliptic weights}\label{appB}

\subsection{Dumont--Viennot snakes}

Dumont and Viennot introduced \emph{snakes} as a combinatorial model encoding alternating permutations together with their recursive structure. A snake is an increasing labeled tree whose nodes are arranged along a zigzag skeleton reflecting the alternation constraint. More precisely, a snake of size $n$ is a rooted increasing tree on $\{1,\dots,n\}$ such that the children of each node are linearly ordered, and the parity of the depth determines whether the ordering is increasing or decreasing. This structure is in bijection with alternating permutations of size $n$. Under this correspondence, leaves correspond to local extrema of the permutation, the root corresponds to the first element, and them insertion of the maximal label corresponds to grafting a new leaf. This bijection preserves natural statistics such as the number and position of local extrema.

\subsection{Snake growth, Entringer recurrence and weighted nodes}

The Entringer recurrence
\[
E(n,k)=E(n,k-1)+E(n-1,n-k)
\]
admits a direct interpretation in the snake model. The two terms correspond to two distinct growth operations: insertion of the new maximal label as a child of an existing node without changing the orientation of the local ordering, and the insertion as a child that reverses the local ordering. These two growth mechanisms correspond exactly to valley-type and peak-type insertions in the permutation model.

\subsection{Elliptic weights on snakes}

To incorporate the elliptic modulus, we introduce a weight on snakes. A node is said to be \emph{elliptic} if it corresponds to a peak-type local extremum in the associated permutation.

\begin{definition}
Let $S$ be a snake. We define its elliptic weight by
\[
w(S)=k^{\nu(S)},
\]
where $\nu(S)$ is the number of elliptic nodes of $S$.
\end{definition}

This weighting is compatible with the growth process: grafting a new leaf at an elliptic node multiplies the weight by $k$, while grafting at a non-elliptic node leaves the weight unchanged.

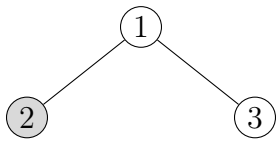
\begin{figure}[!ht]
\centering
\begin{tikzpicture}[level distance=1.2cm,
  every node/.style={circle,draw,inner sep=2pt},
  elliptic/.style={fill=gray!30},
  level 1/.style={sibling distance=3cm},
  level 2/.style={sibling distance=1.5cm}]
\node{1}
  child{node[elliptic]{2}}
  child{node{3}};
\end{tikzpicture}
\caption{A simple Dumont--Viennot snake corresponding to an alternating permutation. The elliptic node (shaded) contributes to a factor $k$ to the weight.}
\label{fig:snake-elliptic}
\end{figure}

Figure~\ref{fig:snake-elliptic} illustrates a minimal snake structure with one elliptic node. The associated weight is $k$. The zigzag ordering is encoded in the planar embedding of the tree.

\subsection{Snakes and Jacobi functions}

For $n\ge0$, let $\mathcal{S}_{2n+1}$ denote the class of weighted snakes of odd size $2n+1$. The exponential generating functions of these classes are given by:
\begin{align*}
\mathrm{sn}(u) &= \sum_{n\ge0} \left(\sum_{S\in\mathcal{S}_{2n+1}} w(S)\right)
\frac{u^{2n+1}}{(2n+1)!},\\
\mathrm{cn}(u) &= \sum_{n\ge0} \left(\sum_{S\in\mathcal{C}_{2n}} 1\right)
\frac{u^{2n}}{(2n)!},\\
\mathrm{dn}(u) &= \sum_{n\ge0} \left(\sum_{S\in\mathcal{D}_{2n}} w(S)\right)
\frac{u^{2n}}{(2n)!},
\end{align*}
where $\mathcal{C}_{2n}$ and $\mathcal{D}_{2n}$ denote the subclasses corresponding respectively to non-elliptic and elliptic growth.

\subsection{Combinatorial interpretation of Jacobi's identity}

Differentiation of $\mathrm{sn}(u)$ corresponds to marking a leaf of a snake and removing the maximal label. This operation splits the snake into two independent sub-snakes, as illustrated in Figure~\ref{fig:snake-split-correct}.

\begin{figure}[!ht]
\centering
\begin{tikzpicture}[level distance=1.2cm,
  every node/.style={circle,draw,inner sep=2pt},
  level 1/.style={sibling distance=3cm},
  level 2/.style={sibling distance=1.5cm}]
\node{1}
  child{node{2}}
  child{node{3}}
  child{node{4}};
\draw[dashed] (-0.4,0.2)--(0.4,0.2);
\end{tikzpicture}
\end{figure}

\begin{figure}[!ht]
\centering
\begin{tikzpicture}[scale=1.1]

\coordinate (A) at (0,0);
\coordinate (B) at (1,1);
\coordinate (C) at (2,0);
\coordinate (D) at (3,1.6); 
\coordinate (E) at (4,0);
\coordinate (F) at (5,1);
\coordinate (G) at (6,0);

\draw[thick] (A)--(B)--(C)--(D)--(E)--(F)--(G);

\filldraw[red] (D) circle (2pt);
\node[above] at (D) {$\max$};

\draw[dashed] (3,-0.3)--(3,1.9);

\node at (1,-0.6) {Left sub-snake};
\node at (5,-0.6) {Right sub-snake};

\end{tikzpicture}
\caption{Removal of the maximal element splits a snake into two independent alternating sub-snakes.}
\label{fig:snake-split-correct}
\end{figure}
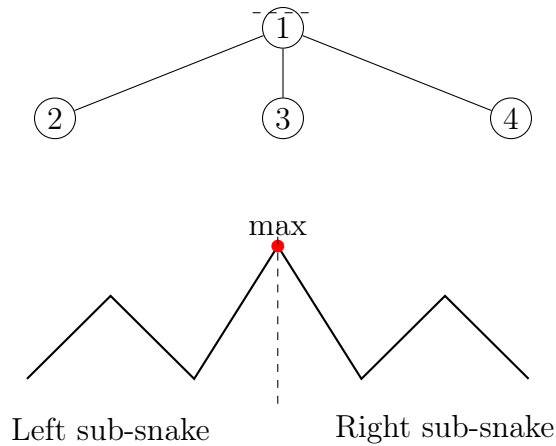

One component records pure alternation and contributes $\mathrm{cn}(u)$, while the other records elliptic branching and contributes $\mathrm{dn}(u)$. This yields the combinatorial factorization
\[
\mathrm{sn}'(u)=\mathrm{cn}(u)\,\mathrm{dn}(u),
\]
thus completing the combinatorial proof of Jacobi's identity. The snake model provides a unified combinatorial framework connecting Entringer numbers, elliptic weights, continued fractions, and Jacobi's elliptic functions. It offers a geometric explanation of why Jacobi's differential identities take a multiplicative form, reflecting the intrinsic decomposability of snake growth.

\end{document}